\newcommand{\ol}[1]{{\overline{#1}}}
\newtheorem{theorem}{Theorem}[section]
\newtheorem{lemma}[theorem]{Lemma}
\newtheorem{proposition}[theorem]{Proposition}
\newtheorem{algorithm}[theorem]{Algorithm}
\begin{document}
\title[The word problem for free adequate semigroups]{The word problem for \\ free adequate semigroups}

\subjclass[2010]{20M05,20M10}

\maketitle

\begin{center}

MARK KAMBITES\footnote{School of Mathematics, University of Manchester, Manchester M13 9PL, 
 England. Email \texttt{Mark.Kambites@manchester.ac.uk}.}
\ and \
ALEXANDR KAZDA\footnote{Department of Mathematics, 1326 Stevenson Center,
Vanderbilt University, Nashville, TN 37240, USA.
 Email \texttt{alex.kazda@gmail.com}.} \\

 \date{\today}

\end{center}

\begin{abstract}
We study the complexity of computation in finitely generated free left, right and two-sided
adequate semigroups and monoids. We present polynomial time (quadratic in the RAM model of
computation) algorithms to solve the word problem and compute normal forms
in each of these, and hence also to test whether any given identity holds in the classes
of left, right and/or two-sided adequate semigroups.
\end{abstract}

\section{Introduction}

Adequate semigroups form a class of semigroups in which the cancellation properties
of elements are reflected in the cancellation properties of idempotents.
They form a natural common generalisation of inverse semigroups and cancellative
monoids. Their importance was first recognised by Fountain 
in the 1970's \cite{Fountain79}, but for many years their study was restricted by
a lack of applicable methods. In the last few years, interest has been
reawakened by the development of several new techniques and results 
(see for example \cite{Araujo11,Branco09a,Gomes09,
K_freeadequate,K_onesidedadequate}).

\textit{Free algebras} form a natural focus of attention when studying any class of algebras
in which they exist; indeed, an understanding of the free objects in a class of algebras usually
yields considerable information about the class as a whole. In the case of adequate semigroups,
the fact that free adequate semigroups of every rank exist follows from
elementary principles of universal algebra (see for example \cite[Proposition~VI.4.5]{Cohn81}),
but an explicit description proved elusive until recently. In \cite{K_freeadequate}, the first
author gave a concrete geometric realisation of the free adequate semigroups (and monoids),
inspired by Munn's celebrated representation of the free inverse semigroups, in terms of directed,
labelled, birooted trees under a natural combinatorial multiplication operation. In \cite{K_onesidedadequate} he showed further
that the certain natural subsemigroups are the free objects in the
related categories of \textit{left adequate} and \textit{right adequate}
semigroups (and monoids). The free left, right and two-sided adequate semigroups also
turn out to be free objects in the larger classes of left, right and two-sided
\textit{Ehresmann semigroups} \cite{Branco09a,Gomes09,Lawson91}.

This representation immediately gave rise to a non-deterministic
polynomial-time
algorithm for the
word problem in finite rank free adequate semigroups and
monoids (and hence also in finite rank free left adequate and right adequate
semigroups and monoids). Since a relation holds in a free algebra in a category
exactly if the corresponding identity holds in \textit{all} algebras in the
category, this also yields an algorithm to check whether a given identity
holds in all adequate (or left adequate or right adequate) semigroups or
monoids.
This algorithm has proved surprisingly practical for human application to 
short words, with the intuitive geometric nature of the representation 
often allowing an effective use of guesswork to circumvent the issue
of non-determinism. However, a non-deterministic algorithm is clearly
not well-suited to computer implementation for larger words, and it would
also be more satisfactory for theoretical reasons to know the precise
asymptotic complexity of the problem.

In this paper, we apply some ideas from constraint satisfaction theory
to refine the algorithm into a deterministic form, thus showing that the
word problems for free adequate, free left adequate and free adequate
semigroups and monoids, and hence also the problem of checking whether identities hold
for all adequate semigroups or monoids, are decidable in quadratic (in the
RAM model of computation) time. Moreover, we show how to efficiently (again, in quadratic
time in the RAM model) compute normal forms (either trees
or words) for elements of the free adequate semigroup or monoid.

\section{Preliminaries}\label{sec_prelim}

In this section we very briefly recall the definitions of (left, right and two-sided) adequate
semigroups, and the first author's characterisation of the free (left, right
and two-sided) adequate semigroups monoids. The reader seeking a more complete
introduction with examples is referred to \cite{Fountain79} for adequate semigroups in general and
\cite{K_freeadequate} for free adequate semigroups and monoids.

Let $S$ be a semigroup whose idempotent elements commute. Denote by $S^1$ the
monoid consisting of $S$ with a new identity element $1$ adjoined. Then $S$ is
called \textit{left adequate} if for every element $x \in S$ there
is an idempotent element $x^+ \in S$ such that
$a x = bx \iff a x^+ = b x^+$ for all $a, b \in S^1$. If $S$ is left adequate then
the choice of $x^+$ is uniquely determined by $x$, and it is usual to
consider $S$ as a $(2,1)$-algebra with the binary operation of multiplication
and the unary operation $x \mapsto x^+$. In particular, we restrict attention
to morphisms respecting both operations.
Dually, $S$ is \textit{right adequate} if for every $x \in S$ there is an
idempotent $x^*$ with $xa = xb \iff x^* a = x^* b$ for all $a, b \in S^1$;
right adequate semigroups are also $(2,1)$-algebras. The semigroup $S$ is
called \textit{(two-sided) adequate} if it is both left adequate and right
adequate; the two maps $x \to x^+$ and $x \to x^*$, which in general will
be different, make $S$ into a $(2,1,1)$-algebra.

Now let $\Sigma$ be an alphabet. A \textit{$\Sigma$-tree} (or just
a \textit{tree} if the alphabet $\Sigma$ is clear) is a finite
directed graph with edges labelled by letters from $\Sigma$, whose
underlying undirected graph is a tree, together
with two distinguished
vertices (the \textit{start} vertex and the \textit{end} vertex) such that
there is a (possibly empty) directed path
from the start vertex to the end
vertex. The (unique) simple path from the start vertex to the end vertex
is termed the \textit{trunk} of the tree; vertices and edges lying on it are
called \textit{trunk vertices} and \textit{trunk edges} respectively. If
$e$ is an edge in such a tree, we denote by $\alpha(e)$,
$\omega(e)$ and $\lambda(e)$ respectively the source vertex, target vertex
and label of $e$. We say that a vertex $v$ is a \textit{descendant} of a vertex
$u$ if the unique simple undirected path between $v$ and the start vertex passes
through $u$.

As a notational convenience, we let $\Sigma' = \lbrace j' \mid j \in \Sigma \rbrace$
be an alphabet disjoint from and in bijective correspondence with $\Sigma$, and say
that a $\Sigma$-tree $X$ has \textit{an edge from $u$ to $v$ labelled $j'$} to mean that
it has an edge from $v$ to $u$ labelled $j$. (Intuitively, the elements of $\Sigma'$ can
be thought of as labelling directed edges when read ``in the wrong direction''. This
notation will allow a unified consideration of labels and directions of edges;
since label and direction play similar roles as obstructions to a morphism mapping one
edge to another, considering them together simplifies our arguments in several places.)

A \textit{morphism} $\rho : X \to Y$ of $\Sigma$-trees $X$ and $Y$ is a
map taking edges to edges and vertices to vertices which commutes with
$\alpha$, $\lambda$ and $\omega$ and maps the start and end vertex of $X$
to the start and end vertex of $Y$ respectively. An \textit{isomorphism} is a
morphism which is bijective on both edges and vertices. A \textit{retraction} is
an idempotent morphism from a $\Sigma$-tree to itself; its image is called a \textit{retract}.
A tree is called \textit{pruned} if it does not admit a
non-identity retraction. (Structures without retractions are often called
\textit{cores} in graph theory.)

The $\Sigma$-tree with a single vertex and no edges is called \textit{trivial}.
The set of all isomorphism types of $\Sigma$-trees (including the trivial
$\Sigma$-tree) is denoted $UT^1(\Sigma)$
while the set of isomorphism types of non-trivial $\Sigma$-trees is
denoted $UT(\Sigma)$. The set of all isomorphism types of pruned trees
[respectively, non-trivial pruned trees] is denoted $T^1(\Sigma)$
[respectively, $T(\Sigma)$]. For any $X \in UT^1(\Sigma)$ there is a unique
$Y \in T^1(\Sigma)$ which is isomorphic to a retract of $X$ \cite[Proposition~3.5]{K_freeadequate};
we denote this pruned tree $\ol{X}$ and call it the \textit{pruning} of $X$.

If $X, Y \in UT^1(\Sigma)$ then the \textit{unpruned product} $X \times Y$ is (the isomorphism type of)
the tree obtained by glueing together $X$ and $Y$, identifying the end vertex
of $X$ with the start vertex of $Y$ and keeping all other vertices and
all edges distinct; this is a well-defined, associative binary operation
\cite[Proposition~4.2]{K_freeadequate}. If
$X \in UT^1(\Sigma)$ then $X^{(+)}$ is (the isomorphism type of) the tree with
the same labelled graph and start vertex of $X$, but with end vertex of $X^{(+)}$ the
start vertex of $X$.
Dually, $X^{(*)}$ is the isomorphism type of the idempotent tree with the
same underlying graph and end vertex as $X$, but with start vertex the end vertex of $X$.
We define corresponding \textit{pruned operations} on $T^1(\Sigma)$ by
$XY = \ol{X \times Y}$, $X^* = \ol{X^{(*)}}$ and $X^+ = \ol{X^{(+)}}$.

A tree with a single edge and distinct start and end vertices is called a
\textit{base tree}; we identify each base tree with the label of its
edge, thus viewing $\Sigma$ itself as a set of $\Sigma$-trees.
The main
result of \cite{K_freeadequate} is that $T^1(\Sigma)$ is the free adequate monoid
on $\Sigma$, being freely generated under pruned multiplication, $*$ and $+$ by the base $\Sigma$-trees
\cite[Theorem~5.16]{K_freeadequate}. The map
$X \to \ol{X}$ is a $(2,1,1,0)$-morphism from $UT^1(\Sigma)$ onto $T^1(\Sigma)$
\cite[Theorem~4.5]{K_freeadequate}.
Moreover, the submonoid of $T^1(\Sigma)$ generated by the base trees under pruned
multiplication and $*$ [respectively, $+$] is the free left adequate [respectively,
right adequate] monoid on $\Sigma$ \cite[Theorem~3.18]{K_onesidedadequate}. Free
adequate, left adequate or right adequate semigroups can all be obtained by discarding
the trivial tree (which is the identity element) in the corresponding monoids
(see \cite[Proposition~2.2]{K_freeadequate} and \cite[Proposition~2.6]{K_onesidedadequate}).

\section{Computing with Formulas and Trees}\label{sec_exptree}

In this section we study the computational complexity of converting between
well-formed formulas, over a generating set $\Sigma$ and the binary and unary
operations in an adequate semigroup, and $\Sigma$-trees. This will allow
us, in later sections, to use algorithms operating on $\Sigma$-trees to
solve computational problems involving formulas.

For our complexity
analysis throughout this paper, we shall work in the RAM
model of computation, in which integer operations and indirection
(finding a value stored at a known position in an array) take unit time.
For simplicity we will analyse 
the complexity of problems for a fixed rank semigroup, say on an alphabet
$\Sigma$, rather than the uniform complexity as the rank grows.
In places where it is necessary to be formal, we shall regard formulas as
words over the alphabet $\Omega$ consisting of generators from $\Sigma$ plus
the symbols
$($, $)$, $*$ and $+$ with the obvious meaning. We denote by $\Omega^*$
the set of all words over the alphabet $\Omega$, including the empty word
which we denote $\epsilon$. Our measure of the size of an expression will
be its length as a word over $\Omega$.

We assume
$\Sigma$-trees are by default stored as a natural number representing the
start vertex, a natural number representing the end vertex
and a list of edges (in no particular order), each
being a triple consisting of a label from $\Sigma$ and two natural numbers
encoding its start vertex and its end vertex. Sometimes it
will be expedient to convert trees to an alternative
representation. Note that the same abstract $\Sigma$-tree can admit
multiple representations, by numbering the vertices and ordering the edges
differently. Our measure of the size of a $\Sigma$-tree will be the
number of edges.

\begin{proposition}\label{prop_exptotree}
Given a well-formed formula $\omega$, one can compute in quadratic time
the (unpruned) $\Sigma$-tree which is its evaluation in $UT^1(\Sigma)$.
Moreover, this tree has size linear in the length of $\omega$.
\end{proposition}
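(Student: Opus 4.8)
The plan is to parse $\omega$ into an expression tree and then evaluate it from the bottom up, carrying each intermediate value as a $\Sigma$-tree in the edge-list representation and using the \emph{unpruned} operations $\times$, $(*)$ and $(+)$ throughout (so that no retraction is ever computed). Since multiplication is denoted by juxtaposition while $*$ and $+$ are postfix unary operators, a standard recursive-descent parse produces, in time linear in $|\omega|$, an expression tree whose leaves are the generator symbols occurring in $\omega$ and whose internal nodes are the binary operation $\times$ and the unary operations $(*)$ and $(+)$. This expression tree has $O(|\omega|)$ nodes, since each node corresponds to at most one symbol of $\omega$; the empty formula is handled separately, evaluating to the trivial tree.

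To evaluate, I would route all vertex numbering through a single global counter that supplies fresh numbers, so that any two trees ever combined automatically have disjoint vertex sets and no renumbering is needed to keep vertices distinct. Each leaf then evaluates to a base tree (two fresh vertices joined by one labelled edge) in constant time. A node $X^{(+)}$ is handled by redirecting the end vertex to be the start vertex of $X$ while leaving the edge list untouched, and dually for $X^{(*)}$; each is constant time. For a node $X \times Y$ one splices the two edge lists together (an $O(1)$ operation if they are stored as linked lists) and then identifies the end vertex of $X$ with the start vertex of $Y$ by passing once through the edges of $Y$ and replacing every occurrence of the start vertex of $Y$ by the end vertex of $X$; the start and end vertices of the result are those of $X$ and $Y$ respectively. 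Because a formula is a tree rather than a directed acyclic graph, each intermediate value is consumed by exactly one parent node, so this destructive manipulation is safe.

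The size claim follows by a one-line structural induction: a base tree has a single edge, the operations $(+)$ and $(*)$ leave the edge set unchanged, and $X \times Y$ has exactly $|X| + |Y|$ edges; hence the final tree has as many edges as there are generator symbols in $\omega$, which is at most $|\omega|$. In particular every tree arising during the computation has $O(|\omega|)$ edges. Each of the $O(|\omega|)$ expression-tree nodes then costs either $O(1)$ (a leaf, $(+)$ or $(*)$) or $O(|\omega|)$ (the single relabelling pass of a product), so the total running time is $O(|\omega|^2)$.

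The evaluation is conceptually simple, and the only step needing genuine care is the cost accounting for the product: a naive implementation that recopied or renumbered both operands at every $\times$ could be much worse than quadratic. Funnelling all vertex numbering through a global fresh counter removes any need to touch $X$, and relabelling only the endpoints equal to the start vertex of $Y$ keeps each product linear in the size of its output and hence in $|\omega|$; summing over the linearly many products gives the stated quadratic bound. One could relabel whichever of $X$ and $Y$ is smaller and invoke the usual weight-balancing argument to improve this particular step to $O(|\omega|\log|\omega|)$, but the quadratic estimate is all that is required here.
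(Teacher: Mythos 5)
Your proposal is correct and follows essentially the same route as the paper: parse $\omega$ into an expression tree, evaluate bottom-up with the unpruned operations, perform $(+)$ and $(*)$ in constant time and each unpruned product in time linear in the tree size by relabelling vertices and concatenating edge lists, giving $O(|\omega|)$ operations of cost $O(|\omega|)$ each and the linear size bound by counting one edge per generator occurrence. The global fresh-vertex counter and the remark on destructive reuse are sensible implementation refinements but do not change the argument.
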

\begin{proof}
A formula of length $n$ can be evaluated by a depth-first traversal
of a parse tree; this will clearly involve performing at most $n$ unpruned
operations with trees whose size is $O(n)$.
Clearly the unpruned $(+)$ and $(*)$ operations on trees can be performed
in constant time. Unpruned multiplication of trees can be performed in
time linear in the number of edges in the trees, by first relabelling
the vertices in the second tree (so that all references to its start
vertex become the end vertex of the first tree, and all its other vertices
are distinct from those in the first tree) and then concatenating the edge
lists and setting start and end vertices appropriately.

Thus, the $O(n)$ unpruned operations can each be performed in $O(n)$ time,
and the evaluation of the expression takes time $O(n^2)$.
Moreover, the resulting tree clearly has exactly one edge for each
occurrence of a generator in the expression, and hence has size linear
in the size of the expression.
\end{proof}

For our present purpose, the computations we wish to perform with
trees will all take quadratic time, so there is no particular benefit in
being able to compute the trees in faster than quadratic time.
However, we remark that the complexity of the algorithm given above can be
improved by a more sophisticated approach, using what is known in the computer science literature as a
``Union Find'' algorithm. Under this approach, when
performing multiplication, instead of merging the end vertex of one
tree with the start vertex of another, we keep them separate (allowing
the data structure to become a forest, rather than a tree) and
maintain another data structure recording which vertices are to be
merged at the end. An efficient implemention of this algorithm is extremely
close to being linear time; see \cite[Section~21.3]{Cormen01}
for more details.

Next, we shall show how a (not necessarily pruned) $\Sigma$-tree can be efficiently
converted into an well-formed formula.
We will define a function $\sigma : UT^1(\Sigma) \to \Omega^*$ such
that for each tree $X$, $\sigma(X)$ is a well-formed formula which
evaluates to $X$
in $UT^1(X)$, and then show that this function can be computed in quadratic time.
Note that since $\sigma$ is a function defined on abstract trees, the
algorithm produces a formula depending only on the abstract tree,
and not on its representation. We shall exploit this in
Section~\ref{sec_normalforms}
below
to compute normal forms (as formulas) for elements of the free adequate
semigroup.
To do this, we shall need a linear order on the set of all
formulas; for now, we will assume that we have such an order fixed. We
will discuss the choice and implementation of this order when we come to
analyse the complexity of the algorithm.

Let $X$ be a tree. We begin by defining a function $\rho$
from the vertex set of $X$ to $\Omega^*$; this is done inductively
by downwards induction on the distance of the vertex from the trunk.
Let $v$ be a vertex, and suppose $\rho$ is already defined on all vertices
strictly further from the trunk than $v$. Let
$v_1, \dots, v_p$ be the vertices adjacent to $v$ and strictly further from
the trunk, noting that $\rho(v_i)$ is already defined for each $i$.
For each $i$, let $e_i$ be the edge connecting $v$ to $v_i$,
and let $a_i \in \Sigma$ be its label. Define a formula $\tau_i \in \Omega^*$
by:
$$\tau_i = \begin{cases}
 (a_i \rho(v_i))+ &\textrm{ if $e_i$ is orientated away from $v$} \\
 (\rho(v_i) a_i)* &\textrm{ if $e_i$ is orientated towards $v$}
\end{cases}$$
Now we define $\rho(v) \in \Omega^*$ to be the word obtained by sorting the words
$\tau_i$ according to our ordering of formulas, and then concatenating.
(If $p = 0$, that is, if $v$ is a ``leaf'', this means $\rho(v) = \epsilon$.)

Now let $t_0, \dots t_q$ be
the trunk vertices of $X$ and $b_1, \dots, b_q$ the labels of the edges between
them, both in the obvious order. We define
$$\sigma(X) = \rho(t_0) b_1 \rho(t_1) b_2 \dots \rho(t_{q-1}) b_q \rho(t_q).$$

A simple but tedious inductive argument, akin to those in \cite{K_freeadequate},
shows that $\sigma(X)$ evaluates to the tree $X$ in $UT^1(\Sigma)$, and that
the number of characters in $\sigma(X)$ is at most four times the number
of edges in $X$.

To compute $\sigma(X)$, we start by precomputing adjacency matrices for $X$
corresponding to each possible edge label and direction; it is easily seen
that this can be done in $O(n^2)$ time where $n$ is the number of edges in $X$.
It is immediate from the inductive method of definition how to compute
$\sigma(X)$ by a simple depth first traversal (following non-trunk edges)
from each of the trunk vertices; this involves
considering each of $O(n)$ vertices once.

At each vertex, the only non-trivial operation is to sort the words $\tau_i$ into order
and then concatenate; the complexity of this of course depends on the choice of order. The
sum length of all the words $\tau_i$ is clearly $O(n)$. If
we choose the order to be lexicographic order (with respect to some arbitrary linear
order on $\Omega$), then a careful implementation of radix sort gives us a lexicographically
sorted list of formulas in $O(n)$ time, and concatenation is
clearly also $O(n)$.

Thus, the total time required for the algorithm is $O(n^2)$, and we have established:

\begin{proposition}\label{prop_treetoexp}
Given an unpruned $\Sigma$-tree $X$, we can in quadratic time compute
a well-formed formula which evaluates to $X$ in $UT^1(\Sigma)$. Moreover,
the formula has size linear in the size of $X$, and depends only
on the isomorphism type of $X$ and not on its representation.
\end{proposition}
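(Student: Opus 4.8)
The plan is to verify, for the explicit function $\sigma$ constructed above, the three assertions of the statement — that $\sigma(X)$ evaluates to $X$, that $\sigma(X)$ has size linear in the size of $X$, and that it can be computed in quadratic time — and then to observe that representation-independence is automatic from the form of the definition.

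First I would establish correctness, that $\sigma(X)$ evaluates to $X$ in $UT^1(\Sigma)$, by an induction running downwards from the leaves toward the trunk, matching the inductive definition of $\rho$. For a vertex $v$, let $X_v$ be the subtree consisting of $v$ together with all its descendants lying strictly further from the trunk, regarded as an idempotent tree with start and end vertex both equal to $v$; the inductive claim is that $\rho(v)$ evaluates, under the unpruned operations $\times$, $(+)$ and $(*)$, to $X_v$. In the inductive step each $\tau_i$ evaluates to the idempotent tree obtained by attaching a single $a_i$-labelled edge (in the appropriate direction) to $X_{v_i}$ and applying $(+)$ or $(*)$ to base it at $v$; since the unpruned product glues the end vertex of one factor to the start vertex of the next, and all the $\tau_i$ share the base point $v$, concatenating them fuses every hanging branch at $v$ and yields $X_v$. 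A final, essentially identical assembly step for the displayed expression $\sigma(X)$ glues the idempotent pieces $\rho(t_j)$ at the successive trunk vertices, interleaved with the base edges $b_j$, recovering $X$ exactly.

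Next I would record the two quantitative claims. The size bound is a matter of bookkeeping: each edge of $X$ contributes its label exactly once (inside one $\tau_i$, or as a trunk letter $b_j$) together with a bounded number of bracket and operation symbols, giving at most four characters of $\sigma(X)$ per edge. For the running time I would precompute in $O(n^2)$ time the adjacency information driving the traversal, then compute $\sigma(X)$ by a single depth-first sweep visiting each vertex once. The one non-trivial cost at a vertex is sorting the words $\tau_i$; taking the fixed order to be lexicographic, a radix sort performs this in time linear in their combined length, which is $O(n)$, so the sorting over all $O(n)$ vertices costs $O(n^2)$; the concatenations are similarly bounded, and the overall running time is $O(n^2)$.

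Finally, representation-independence is immediate, since $\rho$ and $\sigma$ are defined purely from the abstract incidence structure of $X$ and a fixed linear order on $\Omega^*$, with no reference to any numbering of vertices or ordering of the edge list, so isomorphic trees yield identical formulas. I expect the correctness induction to be the main obstacle: it requires careful tracking of which vertex serves as start and as end at each stage, and a verification that the unpruned product and the idempotent operations fuse the branches exactly as intended, introducing no spurious vertex identifications. The size and complexity estimates are then routine.
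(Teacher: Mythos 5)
Your proposal is correct and follows essentially the same route as the paper: the same function $\sigma$, the same downward induction from the leaves to verify that $\rho(v)$ evaluates to the idempotent subtree hanging at $v$ (a verification the paper explicitly leaves as ``a simple but tedious inductive argument''), the same four-characters-per-edge size count, and the same complexity analysis via precomputed adjacency data, a depth-first traversal, and a linear-time radix sort under the lexicographic order. The only difference is that you spell out the correctness induction in more detail than the paper does, which is a welcome addition rather than a deviation.
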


\section{The Word Problem}\label{sec_wp}

Recall that the \textit{word problem} for an algebra $A$ with a given generating set
is the algorithmic problem of determining, given as input two well-formed formulas
over the generating set and the operations of the algebra, whether the formulas
represent the same element of the algebra. The word problem for free objects
in a variety of algebras is of particular importance, since it is trivially
equivalent
to the problem of testing whether a given identity holds in all algebras of
the variety.

In this section, we shall exhibit a quadratic time algorithm to solve
the word problem in a free adequate monoid $T^1(\Sigma)$. In fact in
Section~\ref{sec_normalforms} below, we shall see that it is also possible
to compute normal forms of elements of $T^1(\Sigma)$ in quadratic time;
this automatically yields another algorithm for the word
problem (by computing normal forms and comparing), of the same asymptotic
complexity. However, we present an explicit word problem algorithm first
since this is simpler, potentially easier to implement, and illustrates in
a simple context some of the ideas we will need in Section~\ref{sec_normalforms}.

By Proposition~\ref{prop_exptotree} we can efficiently convert
well-formed formulas in the free adequate monoid into unpruned
$\Sigma$-trees of comparable
size. It follows that to test
(efficiently) whether two given expression $x$ and $y$ represent the same
element of the free adequate monoid, that is, to solve the word problem,
it suffices to compute corresponding $\Sigma$-trees
$X, Y \in UT^1(\Sigma)$,
and then check (efficiently) if $\ol{X} = \ol{Y}$ in $T^1(\Sigma)$.

To solve
this latter problem, we begin with an elementary proposition, which
reduces it a constraint satisfaction problem (formulated in terms of
morphisms between structures, in the manner usual in the literature
of areas such as graph theory and universal algebra --- see for example
\cite{Hell04}).

\begin{proposition}\label{prop_equiv}
Let $X$ and $Y$ be $\Sigma$-trees. Then the following are equivalent:
\begin{itemize}
\item[(i)] $\ol{X} = \ol{Y}$;
\item[(ii)] $X$ and $Y$ admit isomorphic retracts;
\item[(iii)] there is a morphism from $X$ to $Y$ and a morphism from $Y$ to $X$.
\end{itemize}
\end{proposition}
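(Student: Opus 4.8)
The plan is to prove the equivalence by establishing a cycle of implications, exploiting the fact stated earlier that every tree $X$ has a canonically-associated pruned retract $\ol{X}$, and that pruning is a morphism onto $T^1(\Sigma)$. The key conceptual tools are: (a) the uniqueness of the pruned tree isomorphic to a retract of $X$ (\cite[Proposition~3.5]{K_freeadequate}); (b) the observation that a retraction $\rho : X \to X$ is in particular a morphism, so retracts are morphic images of $X$; and (c) the idempotence built into the notion of pruning. My intention is to show $(i) \Rightarrow (ii) \Rightarrow (iii) \Rightarrow (i)$.

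First I would argue $(i) \Rightarrow (ii)$. By definition $\ol{X}$ is (the isomorphism type of) a pruned tree isomorphic to a retract of $X$, and similarly $\ol{Y}$ is isomorphic to a retract of $Y$. If $\ol{X} = \ol{Y}$, then these two retracts --- one of $X$, one of $Y$ --- are isomorphic, giving directly the isomorphic retracts demanded by $(ii)$. Next, for $(ii) \Rightarrow (iii)$, suppose $X$ and $Y$ have isomorphic retracts $X'$ and $Y'$ respectively. A retract comes equipped with a retraction $r_X : X \to X'$ (viewed as a morphism $X \to X$ with image $X'$) and an inclusion $\iota_X : X' \hookrightarrow X$; composing, a morphism $X \to Y$ is obtained as $\iota_Y \circ \phi \circ r_X$, where $\phi : X' \to Y'$ is the given isomorphism, and symmetrically $\iota_X \circ \phi^{-1} \circ r_Y$ gives a morphism $Y \to X$. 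Here I must be slightly careful that a retract is genuinely a subobject of its parent so that the inclusion and corestricted retraction are honest tree-morphisms respecting $\alpha$, $\omega$, $\lambda$ and the start/end vertices; this is immediate from the definition of retraction as an idempotent self-morphism.

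The implication I expect to be the main obstacle is $(iii) \Rightarrow (i)$, since it is where the genuine structure-theoretic content lies rather than formal nonsense. Given morphisms $f : X \to Y$ and $g : Y \to X$, I would pass to the prunings: because pruning is a $(2,1,1,0)$-morphism and more fundamentally because $\ol{X}$ is characterised as the unique pruned retract, I want to produce morphisms $\ol{X} \to \ol{Y}$ and $\ol{Y} \to \ol{X}$ between the two \emph{pruned} (core) trees. The natural route is to restrict $f$ and $g$ to the retracts $\ol{X} \subseteq X$ and $\ol{Y} \subseteq Y$ and then prune back into the cores via the retractions, obtaining morphisms $\ol{X} \to \ol{Y}$ and $\ol{Y} \to \ol{X}$. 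The crux is then to invoke the standard core lemma: between two finite cores, mutually inverse directions of morphisms force the cores to be isomorphic. Concretely, the composite $\ol{X} \to \ol{Y} \to \ol{X}$ is an endomorphism of a pruned (retraction-free) tree, hence must be an automorphism (any endomorphism of a core is an isomorphism, as a non-surjective one would yield a proper retraction after composing with a suitable power); likewise on the $\ol{Y}$ side. It follows that the two maps are mutually inverse isomorphisms, so $\ol{X} \cong \ol{Y}$ as abstract trees, i.e.\ $\ol{X} = \ol{Y}$ as isomorphism types, which is $(i)$. The delicate point to verify carefully is that an endomorphism of a finite pruned tree is necessarily bijective; I would establish this directly from the definition of pruned (no non-identity retraction) together with finiteness, noting that an idempotent power of any endomorphism is a retraction and hence the identity, forcing injectivity and surjectivity.
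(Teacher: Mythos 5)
Your proof is correct, but you organise the hard direction differently from the paper. The paper proves $(i)\Leftrightarrow(ii)$ by citing \cite[Proposition~3.5]{K_freeadequate} and then closes the loop with $(iii)\Rightarrow(ii)$ directly: from $\sigma : X \to Y$ and $\tau : Y \to X$ it takes $n$ with $(\tau\sigma)^n$ and $(\sigma\tau)^n$ idempotent, observes these are retractions of $X$ and $Y$ with images $X'$ and $Y'$, and checks that $\sigma$ and $\tau(\sigma\tau)^{n-1}$ restrict to mutually inverse isomorphisms $X' \cong Y'$. You instead aim $(iii)\Rightarrow(i)$ at the canonical pruned retracts: restrict and corestrict to get morphisms $\ol{X}\to\ol{Y}$ and $\ol{Y}\to\ol{X}$, then invoke the core lemma that every endomorphism of a finite pruned tree is an automorphism (correctly justified via the idempotent power being a retraction, hence the identity). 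Both arguments ultimately rest on the same idempotent-power trick; the paper's version is slightly more self-contained for that step, since it manufactures the isomorphic retracts directly from $\sigma$ and $\tau$ without needing to pass through $\ol{X}$ and $\ol{Y}$ or the core lemma, whereas yours lands on $(i)$ immediately and makes the ``cores admit no proper endomorphisms'' principle explicit, which is arguably more illuminating given the constraint-satisfaction framing of the paper. One small imprecision: your two maps $\ol{X}\to\ol{Y}$ and $\ol{Y}\to\ol{X}$ need not be \emph{mutually inverse} (their composites are automorphisms, not necessarily identities), but each is individually bijective and hence an isomorphism, which is all you need.
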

\begin{proof}
The equivalence of (i) and (ii) follows from \cite[Proposition~3.5]{K_freeadequate}, so
it suffices to establish the equivalence of (ii) and (iii).

If (ii) holds then, in particular, some retract of $X$ is isomorphic to a
substructure of $Y$; composing the retraction of $X$ with the isomorphism
yields a morphism from $X$ to $Y$. By symmetry of assumption there is also
a morphism from $Y$ to $X$, so (iii) holds.

Now suppose (iii) holds, say $\sigma : X \to Y$ and $\tau : Y \to X$ are
morphisms. Then the compositions $\tau \circ \sigma : X \to X$ and
$\sigma \circ \tau : Y \to Y$ are maps on finite sets, and it follows that
we may choose $n$ such that both
$(\tau \circ \sigma)^n : X \to X$ and $(\sigma \circ \tau)^n : Y \to Y$
are idempotent, that is, are retractions of $X$ and $Y$ respectively.
Let $X'$ and $Y'$ be the retracts which are the respective images of
these retractions. Now it is easily verified that 
$\sigma$ and $\tau \circ (\sigma \circ \tau)^{n-1}$ restrict to mutually
inverse isomorphisms between the retracts $X'$ and $Y'$, showing that
(ii) holds.
\end{proof}

Proposition~\ref{prop_equiv} implies that to check if two $\Sigma$-trees
are equivalent, and hence by the preceding arguments to solve the word problem
for the free adequate semigroup on $\Sigma$, it suffices to check whether each
$\Sigma$-tree admits a morphism to the other. Our main goal in the rest of
this section, then, is an efficient algorithm to test, given an ordered pair of
$\Sigma$-trees, whether there is a morphism from the first to the second.
Our approach is essentially a constraint propagation
algorithm, with the correctness of the result being shown by an 
arc consistency argument utilising the tree-like nature of
our geometric representatives for elements. The ideas behind the proof
are well known in the fields of constraint satisfaction and artificial
intelligence (see for example
\cite{Dechter87}), but for the benefit of semigroup theorists who may not be
familiar with these fields we present the algorithm in an elementary form:

\begin{algorithm}\label{algH} \ \\

\noindent\textbf{Input:} Two $\Sigma$-trees $T_1$ and $T_2$ on $n$ and $m$ vertices respectively.

\noindent\textbf{Output:} ``Yes'' if there exists a homomorphism from $T_1$ to $T_2$. ``No'' otherwise.

\begin{enumerate}
\item Consider the start vertex of $T_1$, label this vertex $1$, and then use
a depth-first traversal (ignoring direction of edges) to label the remaining vertices
from $2$ to $n$ in ascending order.

\item For each $i$ in $\{1,\dots,n\}$, let $B_i$ be the set of vertices
in $T_2$.

\item For the start [end] vertex $i$ set $B_i$ to be the singleton set containing the
start [end] vertex of $T_2$
\item For $i$ descending from $n$ to $1$, and each vertex $j > i$ adjacent to $i$, do the following:
\begin{itemize}
\item[(i)] Let $a \in \Sigma \cup \Sigma'$ be the label of the edge from $i$ to $j$ in $T_1$;
\item[(ii)] Let $B_i:= B_i \cap B_j^\star$ where
\begin{align*}
B_j^\star = \lbrace x \mid & \textrm{ $T_2$ has an edge labelled $a$ } \\
&\textrm{ from $x$ to some $y \in B_j$ } \rbrace.
\end{align*}
\end{itemize}
\item If $B_1=\emptyset$, output ``No''; otherwise output ``Yes.''
\end{enumerate}
\end{algorithm}

\begin{proposition}\label{prop_wpcorrect}
Algorithm~\ref{algH} is correct, that is, $B_1$ is non-empty on completion of the
algorithm if and only if there is a morphism from $T_1$ to $T_2$.
\end{proposition}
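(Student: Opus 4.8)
The plan is to prove the two directions of the equivalence separately, establishing that the final sets $B_i$ produced by Algorithm~\ref{algH} correctly characterize exactly which vertices of $T_2$ can serve as images of the vertices of $T_1$ under some morphism. The key structural insight is that the depth-first labelling in step~(1) gives $T_1$ a rooted-tree structure in which each vertex $i > 1$ has a unique neighbour $j < i$ (its parent), so that the processing order in step~(4) visits each vertex only after all of its children have been processed. This is what makes the propagation well-founded.

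For the easy direction, I would first show that if a morphism $\phi : T_1 \to T_2$ exists, then $B_1$ is nonempty on completion. The natural claim to prove, by downward induction on $i$ following the order of step~(4), is an invariant of the form: \emph{whenever we have finished updating $B_i$, we have $\phi(i) \in B_i$}. The base cases are the constraints imposed in step~(3), which hold because $\phi$ maps start and end vertices to start and end vertices. For the inductive step, since $\phi$ is a morphism it commutes with $\alpha$, $\omega$ and $\lambda$, so the edge from $i$ to $j$ labelled $a$ forces an edge from $\phi(i)$ to $\phi(j)$ labelled $a$ in $T_2$; combined with the inductive hypothesis $\phi(j) \in B_j$, this places $\phi(i)$ in $B_j^\star$, and hence the intersection in step~(4)(ii) retains $\phi(i)$. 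Taking $i = 1$ then gives $\phi(1) \in B_1$, so $B_1 \neq \emptyset$.

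For the converse, the hard part, I would suppose $B_1 \neq \emptyset$ and construct a morphism explicitly. The idea is to build $\phi$ by a \emph{downward} recursion on the tree rooted at vertex $1$: choose any element of $B_1$ as $\phi(1)$, and then, having chosen $\phi(i)$, extend to each child $j$ of $i$. The crucial point is that the arc-consistency condition guaranteed by step~(4) must ensure that a compatible choice for each child is always available: precisely, one needs the invariant that for every vertex $i$ and every $x \in B_i$, and every child $j$ of $i$ with edge label $a$, there exists $y \in B_j$ such that $T_2$ has an edge labelled $a$ from $x$ to $y$. This is exactly the statement that after processing $i$ we had $B_i \subseteq B_j^\star$, which holds because $B_i$ was intersected with $B_j^\star$. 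I would verify that the sets $B_j$ are never enlarged after the step that assigns a child's parent, so the consistency established at processing time survives to the construction phase. The tree structure is essential here: because $T_1$ is a tree, the children of distinct vertices are disjoint, so these choices can be made independently without any conflict, and no constraint between two already-chosen vertices can be violated.

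The main obstacle I anticipate is bookkeeping the interaction between the order of updates in step~(4) and the monotonicity of the sets $B_i$. One must check carefully that when $B_i$ is shrunk on behalf of one child $j$, this does not later invalidate the consistency relation for a different child $j'$ of $i$, and that an update to $B_i$ (a descendant of its own parent) does not retroactively break the inclusion $B_{\text{parent}} \subseteq B_i^\star$ that was established earlier. Since $i$ is processed strictly after all its children but strictly before its parent, and $B_i$ is only ever decreased, the parent's constraint is computed against the \emph{final} value of $B_i$; articulating this ordering precisely is the delicate bookkeeping step, but once it is pinned down the downward construction of $\phi$ goes through routinely.
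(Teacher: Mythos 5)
Your proposal is correct and follows essentially the same route as the paper: the forward direction is the paper's maximal-counterexample argument repackaged as a downward induction on the processing order, and the converse is the same arc-consistency construction, choosing $\phi$ top-down from the root using the invariant $B_i \subseteq B_j^\star$ for each child $j$ (which persists because $B_j$ is finalized before $B_i$ is touched and $B_i$ only shrinks thereafter). The only detail worth adding when writing it up is the observation, made explicitly in the paper, that if a vertex being extended to is the end vertex of $T_1$ then its set $B_i$ is contained in the singleton end vertex of $T_2$, so the constructed map automatically preserves the end vertex.
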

\begin{proof}
For brevity, we identify the vertices with the labels from $1$ to $n$
assigned in the algorithm.
Suppose first that there is a morphism $\sigma : T_1 \to T_2$. We claim
that $B_i$ contains $\sigma(i)$ for all $i$, from which it follows in
particular that $B_1$ contains $\sigma(1)$ so that $B_1$ is
non-empty as required. Indeed, if not, choose $i$ maximal such that
$\sigma(i) \notin B_i$. Clearly $\sigma(i)$ was in $B_i$ after Step 2 of the
algorithm and, because $\sigma$ preserves start and end vertices, also after
Step 3; therefore, it must have been removed during Step 4.
For this to have happened, there must have been a $j > i$ and an edge from $i$
to $j$ (labelled $a \in \Sigma \cup \Sigma'$, say) such that
$\sigma(i) \notin B_j^\star$.
By the definition of $B_j^\star$, this means there was (at the time of removal)
no edge labelled $a$ from $\sigma(i)$ to any $y \in B_j$.
 But because $\sigma$ is a morphism, $\sigma(j) \in B_j$
is connected to $\sigma(i)$ by such an edge, so it must be that
$\sigma(j)$ was not in $B_j$ at the time $\sigma(i)$ was removed from
$B_i$. Now since $B_j$ only gets smaller, $\sigma(j)$ is not in $B_j$
at the end of the algorithm. But $j > i$, so this contradicts the
maximality of $i$.

Conversely, suppose $B_1$ is non-empty at the end of the algorithm. We
define a morphism $\sigma : T_1 \to T_2$ inductively as follows. First,
choose $\sigma(1) \in B_1$ arbitrarily. Now assume $1 < i < n$ and
we have defined
$\sigma$ on the vertices $1$, \dots $i-1$ and all edges between them, in
such a way as to preserve adjacency, labels and directions of edges, and
the start and end vertices if appropriate, and such that $\sigma(p) \in B_p$
for $1 \leq p \leq i-1$.

Since $T_1$ is a tree and the edges were numbered by a depth-first 
traversal, it follows that vertex $i$ is connected to vertex $k$ for some 
unique $k < i$; suppose $T_1$ has an edge from $k$ to $i$ labelled
$a \in \Sigma \cup \Sigma'$. 

Considering the way $B_k$ is constructed, we see that every vertex in $B_k$,
including $\sigma(k)$, is connected to some $v \in B_i$. 
Moreover, if $i$ happens to be the start [respectively, end] vertex of $T_1$,
then $B_i$ was originally set to contain only the start [end] vertex of $T_2$,
so it must be that $v \in B_i$ is the start [end] vertex of $T_2$.
Thus, by defining $\sigma(i) = v$ and $\sigma(e)$ to be the 
appropriate edge, we extend $\sigma$ to be defined on the vertices 
$1, \dots, i$ and all edges between, with the appropriate properties.
\end{proof}

We now analyse the complexity of Algorithm~\ref{algH}.
At the start of the algorithm, we can precompute for
each vertex in $T_1$ a list of edges adjacent to that vertex; this can be done
in $O(n)$ time.

Having done this, Step 1 of the algorithm (a simple depth first traversal
of the tree $T_1$) has complexity $O(n)$. If we store the lists $B_i$ as
arrays of $m$ boolean flags then Step 2 has complexity $O(mn)$
since we need to initialise $mn$ values. Step 3 has complexity $O(m)$,
since we must reset $m-1$ values for each of the start and end vertices.

The most interesting part is the complexity of Step 4. The number of
iterations of the outer loop is clearly bounded by the number of edges
in $T_1$, so it is $O(n)$ and the precomputed lists of edges mean there is
no extra overhead in finding the edges in the correct order.
In each iteration, the fact that the corresponding edge has been found
means Step 4(i) takes constant time. In Step 4(ii), computing $B_j^\star$
involves
passing through the list of all $O(m)$ edges of $T_2$ and for each edge
checking (in constant time) if one of the ends lies in $B_j$ and if the
label is correct; this takes
$O(m)$ time. Computing the intersection is simply
a boolean ``and'' operation on two arrays of length $m$, and so also takes $O(m)$
time. Thus, Step 4 takes time $O(mn)$, and the total complexity of the
algorithm is $O(mn)$.

Combining the above arguments with Proposition~\ref{prop_exptotree},
we have established the following main result:
\begin{theorem}
The word problem for any finite rank free left adequate, free right
adequate or free adequate semigroup is decidable in time
polynomial (quadratic, in the RAM model of computation) in the combined
length of the two formulas.
\end{theorem}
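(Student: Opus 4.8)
The plan is to assemble the machinery developed in the preceding sections into a single decision procedure and then bound its running time. Given two well-formed formulas $x$ and $y$ over $\Sigma$ and the relevant operations, of lengths $n_1$ and $n_2$ respectively, I would first invoke Proposition~\ref{prop_exptotree} to compute, in time $O(n_1^2)$ and $O(n_2^2)$, the unpruned trees $X, Y \in UT^1(\Sigma)$ to which $x$ and $y$ evaluate; the same proposition guarantees that $X$ and $Y$ have $O(n_1)$ and $O(n_2)$ edges respectively. As observed in the opening discussion of this section, $x$ and $y$ represent the same element of the free adequate monoid precisely when $\ol{X} = \ol{Y}$ in $T^1(\Sigma)$, so the word problem reduces to testing this equality of pruned trees.

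Next I would apply Proposition~\ref{prop_equiv}, which shows that $\ol{X} = \ol{Y}$ holds if and only if there is a morphism from $X$ to $Y$ and a morphism from $Y$ to $X$. Each existence question can be decided by running Algorithm~\ref{algH}, correct by Proposition~\ref{prop_wpcorrect}, so the formulas are equal exactly when both invocations, on the ordered pairs $(X,Y)$ and $(Y,X)$, return ``Yes''. By the complexity analysis following the algorithm, a run on trees with $n$ and $m$ vertices costs $O(mn)$; since the vertex counts of $X$ and $Y$ are $O(n_1)$ and $O(n_2)$, each of the two runs costs $O(n_1 n_2)$.

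Writing $N = n_1 + n_2$ for the combined length of the input, every step above is bounded by $O(N^2)$: the two tree conversions cost $O(n_1^2 + n_2^2) = O(N^2)$, and the two morphism tests cost $O(n_1 n_2) = O(N^2)$. Hence the whole procedure runs in quadratic time in the combined length, establishing the stated bound.

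It then remains to argue that this single procedure settles all three cases uniformly, which I regard as the only point requiring care rather than a genuine obstacle. The free left adequate, free right adequate and free two-sided adequate monoids are all realised inside $T^1(\Sigma)$ under the same pruned operations, differing only in which of the unary operations $*$ and $+$ may occur in the generating formulas; since the tree-level equality test $\ol{X} = \ol{Y}$ is insensitive to this distinction, the algorithm applies verbatim once the input has been parsed. The passage from monoids to semigroups changes nothing essential, as each free adequate semigroup is obtained from its monoid merely by discarding the trivial tree, and two nontrivial elements agree in the semigroup exactly when they agree in the monoid. The remaining work is the routine bookkeeping converting the per-formula bounds into a single bound in the combined length; because the two conversions do not interact, this presents no difficulty.
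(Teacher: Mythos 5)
Your proposal is correct and follows essentially the same route as the paper: convert both formulas to unpruned trees via Proposition~\ref{prop_exptotree}, reduce equality of prunings to the existence of morphisms in both directions via Proposition~\ref{prop_equiv}, and decide each with Algorithm~\ref{algH}, whose correctness and $O(mn)$ cost are already established. Your explicit remarks on the one-sided and semigroup cases only make precise what the paper leaves implicit (the one-sided free objects sit inside $T^1(\Sigma)$ and the semigroups merely omit the trivial tree), so there is nothing to add.
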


\section{Pruned Trees and Normal Forms}\label{sec_normalforms}

In this section, we show how to efficiently compute the minimal retract of a 
$\Sigma$-tree. Combined with the results of Section~\ref{sec_exptree}, this
will allow us to compute normal forms (as formulas) for elements of
free adequate monoids. Our main algorithm is the following, the first four
steps of which are essentially the same as in Algorithm~\ref{algH}:

\begin{algorithm}\label{algN} \ \\

\noindent\textbf{Input:} A $\Sigma$-tree $T$ on $n$ vertices.

\noindent\textbf{Output:} The vertex set of a pruned subtree of $T$,
isomorphic to the $\ol{T}$.

\begin{itemize}
\item[(1)] Consider the start vertex of $T$, label this vertex $1$, and then use
a depth-first traversal (ignoring direction of edges) to label the remaining vertices
from $2$ to $n$ in ascending order.

\item[(2)] For each $i$ in $\{1,\dots,n\}$, set $B_i = \lbrace 1, \dots, n \rbrace$.

\item[(3)] For the start [end] vertex $i$ set $B_i = \lbrace i \rbrace$.

\item[(4)] For $i$ descending from $n$ to $1$ and each $j$ with $j > i$ and $i$ connected to $j$, do the following:
\begin{itemize}
\item[(i)] Let $a \in \Sigma \cup \Sigma'$ be the label of the edge in $T$
from $i$ to $j$.
\item[(ii)] Let $B_i := B_i \cap B_j^\star$ where
\begin{align*}
B_j^\star = \lbrace x \mid & \textrm{ $T$ has an edge labelled $a$ } \\
&\textrm{ from $x$ to some $y \in B_j$ } \rbrace.
\end{align*}
\end{itemize}

\item[(5)] Set $X = \lbrace 1, \dots, n \rbrace$.

\item[(6)] For $w$ ascending from $1$ to $n$ and $a \in \Sigma \cup \Sigma'$,
do the following:
\begin{itemize}
\item[(i)] If $w \notin X$ then go to the next $w$.
\item[(ii)] Otherwise, find all vertices $u$ such that $a$ labels
an edge from $w$ to $u$ and put them in a list $K$.
\item[(iii)] For each $u \in K$ such that $u > w$:
\begin{itemize}
\item[(a)] Check if $K \cap B_u = \lbrace u \rbrace$.
\item[(b)] If \textbf{not}, then remove $u$ from $K$, and traverse the
tree below $u$, removing $u$ and all its descendant vertices from $X$.
\end{itemize}
\end{itemize}
\item[(7)] Output $X$.
\end{itemize}
\end{algorithm}

Our next aim is to prove the correctness of this algorithm.

\begin{lemma}\label{lemma_retract}
The subtree $X$, as computed at the end of Algorithm~\ref{algN}, is a retract of $T$.
\end{lemma}
\begin{proof}
We shall show that each time a vertex and its descendants are removed from
$X$ at Step 6(iii)(b), there is a retraction from the tree $X$ prior
to the removal, onto the tree $X$ after the removal. Since the successive
subtrees $X$ form a chain under inclusion, it is clear that composing these
retractions in the appropriate order yields a retraction from $T$ onto
the final tree $X$, as required.

Indeed, suppose $u$ and its descendants are removed from $X$ at some point.
Let $w$, $a$ and $K$ be as in the algorithm at that point, and let $X_1$ and $X_2$
be the values of $X$ immediately before and after
the deletion, respectively.

Note that, since the identity map is a morphism, it is easily verified
that $i \in B_i$ for all vertices $i$ of $T$. The
fact that $u$ was removed means that $K \cap B_u \neq \lbrace u \rbrace$, and
we know $u \in K \cap B_u$, so we may choose some vertex $v \in K \cap B_u$
with $v \neq u$.

First, we follow the procedure from the proof of Proposition~\ref{prop_wpcorrect}
to inductively define a morphism $\sigma : T \to T$, but
being  more careful about our choices in order to ensure that
$\sigma(u) = \sigma(v) = v$.
We start by setting $\sigma(i) = i$ for all $i < u$; since $i \in B_i$ for all
$i$ it is easily verified that this is consistent with the procedure
in Proposition~\ref{prop_wpcorrect}. Note in particular
that $w < u$, so this means $\sigma(w) = w$. Now since $u, v \in K$, there
are edges from $w = \sigma(w)$ to $u$ and $v$ both labelled $a$, so in
following the procedure of Proposition~\ref{prop_wpcorrect}
we may choose to set $\sigma(u) = v$.
We now continue the process from the proof of Proposition~\ref{prop_wpcorrect}. For each vertex
$r$ in turn, if $r$ is a descendant of $u$, then we define $\sigma(r)$
as in the proof of Proposition~\ref{prop_wpcorrect}, making any choices arbitrarily. If $r$ is
not a descendant of $u$ then the unique vertex $k < r$ adjacent to $r$
is also not a descendant of $u$; thus, we have already defined
$\sigma(k) = k$ and we may set $\sigma(r) = r$.

Now $\sigma$ is a map on a finite set, and so has an idempotent power,
say $\sigma^i$. Since $v$ is not a descendant of $u$, we have
$\sigma(v) = v$, and hence $\sigma^i(u) = \sigma^{i-1}(v) = v$, so $u$
is not in the image of $\sigma^i$. Since the image of $\sigma^i$ is a
$\Sigma$-tree, it must contain the start
vertex and be connected, so we deduce that no descendants of $u$ are
in the image of $\sigma^i$. It follows that $\sigma^i$ maps $X_1$ to
$X_2$. Moreover, $\sigma$ fixes $X_2$, so restricting $\sigma^i$ to
$X_1$ gives the required retraction of $X_1$ onto $X_2$.
\end{proof}

\begin{lemma}
The retract $X$, as computed at the end of Algorithm~\ref{algN}, is pruned.
\end{lemma}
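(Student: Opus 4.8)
The goal is to show that the final tree $X$ produced by Algorithm~\ref{algN} is pruned, i.e.\ admits no non-identity retraction. By the preceding lemma $X$ is already a retract of $T$, so it remains to rule out any further self-retraction. The plan is to argue by contradiction: suppose $X$ admits a non-identity retraction, and extract from it a pair of distinct vertices that Step 6 should have collapsed, thereby contradicting the fact that Step 6 ran to completion without removing them.

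First I would recall the standard fact (used already in the proof of Lemma~\ref{lemma_retract}) that a morphism on a finite $\Sigma$-tree can be iterated to an idempotent power, so the existence of \emph{any} non-identity morphism $X \to X$ is equivalent to the existence of a non-identity retraction. So I may assume there is a non-identity morphism $\phi : X \to X$. Because morphisms preserve the start and end vertices and preserve adjacency, labels and directions, and because $X$ is a tree, $\phi$ is forced on the trunk (it must fix it pointwise) and is highly constrained off the trunk. The key combinatorial point is that if $\phi$ is not the identity, then there is some vertex $w$ and two distinct vertices $u, v$ adjacent to $w$ via edges with the \emph{same} label/direction $a$, with $\phi$ collapsing one onto the other --- concretely, one can find the vertex closest to the trunk that is moved by $\phi$, and its parent $w$ is fixed, so $w$ has two distinct $a$-neighbours $u$ and $v = \phi(u)$ on the far side.

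The heart of the argument is then to connect such a collapsible pair back to the containers $B_i$ computed in Steps 2--4 and the test ``$K \cap B_u = \{u\}$'' in Step 6(iii)(a). I would show that if $\phi(u) = v$ with $u \neq v$ both $a$-neighbours of the fixed vertex $w$, then $v$ must survive in $B_u$: intuitively $B_u$ collects exactly those vertices to which $u$ could be mapped consistently with the constraints below $u$, and the restriction of $\phi$ to the subtree below $u$ witnesses that $v \in B_u$. Since $u, v$ are both neighbours of $w$ via $a$-edges, both lie in the list $K$ formed at $w$ in Step 6(ii), so $v \in K \cap B_u$ with $v \neq u$, meaning the test at Step 6(iii)(a) would have \emph{failed} and $u$ would have been removed from $X$ --- contradicting that $u$ is a vertex of the final $X$. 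The main obstacle, and the step I would spend the most care on, is precisely establishing the containment $v \in B_u$: this requires checking that the partial morphism coming from $\phi$ on the descendants of $u$ matches the defining property of $B_u$ from Step 4, including the subtlety that vertices may be collapsed in several places and that the depth-first numbering orders children after parents so the $B_i$ are computed in the right order. I would handle this by an induction down the subtree rooted at $u$, mirroring the arc-consistency argument of Proposition~\ref{prop_wpcorrect}, and I would need to be careful that Step 6 processes $w$ before it could have independently deleted $u$ for another reason.
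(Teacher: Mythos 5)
Your overall strategy is the paper's: assume a non-identity retraction $\phi$ of $X$, pick a moved vertex $u$ as close to the trunk (equivalently, as early in the numbering) as possible so that its neighbour $w$ on the start-vertex side is fixed, note that $u$ and $v=\phi(u)$ both appear in the list $K$ formed at $w$, and contradict the survival of $u$ via $v\in K\cap B_u$. However, there is a genuine gap at exactly the step you flag as delicate, namely proving $v\in B_u$. You propose to establish this by an induction over the subtree below $u$ using the restriction of $\phi$. But the containers $B_i$ are computed in Steps 2--4 of Algorithm~\ref{algN} with respect to the \emph{whole} input tree $T$, before any deletions: the intersection defining $B_u$ ranges over all neighbours of $u$ in $T$ farther from the start vertex, including ones that were later removed from $X$ and on which $\phi$ is undefined. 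An induction carried out inside $X$ using $\phi$ alone therefore cannot certify membership in $B_u$; the constraints coming from the deleted branches are simply not addressed. The paper's short but essential device is to precompose $\phi$ with the retraction $\pi : T \to X$ furnished by Lemma~\ref{lemma_retract}; then $\phi\circ\pi$ is a morphism from $T$ to itself sending $u$ to $v$, and the already-proved forward half of Proposition~\ref{prop_wpcorrect} (every morphism sends $i$ into $B_i$) yields $v\in B_u$ with no new induction. Without some such idea your argument does not close.

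Two smaller inaccuracies, neither fatal. First, your asserted equivalence between the existence of a non-identity morphism $X\to X$ and a non-identity retraction does not follow from the idempotent-power argument as stated (the idempotent power of a non-trivial finite-order automorphism is the identity); fortunately you only need the trivial direction, since a retraction is already a morphism. Second, $v=\phi(u)$ need not lie ``on the far side'' of $w$: it could be $w$'s own parent if that edge carries the same label and orientation. This is harmless because Step 6(ii) places \emph{all} $a$-neighbours of $w$ into $K$, but the argument should not assume $v$ is a child of $w$.
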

\begin{proof}
Suppose not, say $X$ admits a proper retraction $\sigma : X \to X$. Let
$u$ be a vertex in $X$ but not in the image of $\sigma$, and suppose $u$
is minimal with respect to this condition. Then $u \neq 1$, since $1$ labels
the start vertex which is fixed by every retraction. Thus, we may let $w$ be
the unique vertex with $w < u$ and $w$ adjacent to $u$.

Since $w < u$, by the minimality
of the choice of $u$, we have $\sigma(w) = w$. It follows that $\sigma(u)$
is connected to $w$ by an edge of the same label and orientation as that
connecting $u$ to $w$. This means that, when considering $w$ at step
6(iii), we would initially have had $\sigma(u) \in K$. Since
$\sigma(u)$ is in the final tree $X$, it was never removed from $K$.
Moreover, composing the retraction of $T$ onto $X$ (given by
Lemma~\ref{lemma_retract}) with
$\sigma$ gives a morphism of $T$ mapping $u$ to $\sigma(u)$; it follows
from the argument in the proof of Proposition~\ref{prop_wpcorrect} that
$\sigma(u) \in B_u$.

This means that at the time
$u$ was considered in Step 6(iii)(a) we had
$\sigma(u) \in K \cap B_u$. But then $K \cap B_u \neq \lbrace u \rbrace$,
so $u$ would have been removed from $X$, giving a contradiction.
\end{proof}

Turning to the complexity of the algorithm, Steps (1)-(4) are exactly
as in Algorithm~\ref{algH} (except that the source and target trees for
the morphism are the same, so $m=n$), and by the same analysis as in
Section~\ref{sec_wp} take time $O(n^2)$.

For efficiency, we store the set $X$ as an array of boolean flags. The time
requirement for Step
(5) is clearly $O(n)$. The loop in Step 6 is iterated at most $O(n)$ times.
In each such iteration, step (i) takes constant time. Step (ii) cannot
involve checking more than $O(n)$ vertices, so the total contribution
to the time required will be $O(n^2)$. In step (iii), note that each
element of $L$ is uniquely determined (across the entire algorithm) by
the ordered pair $(w,u)$ where there is always an edge between $w$ and $u$;
thus, the number of iterations of this step across the whole algorithm is
at most twice the number of edges in the tree, which is $O(n)$.
Within each iteration, each step takes $O(n)$ time, so the total contribution is
$O(n^2)$.

Thus, we have established:
\begin{theorem}\label{thm_pruning}
Given a $\Sigma$-tree $T$, one can compute in polynomial time (quadratic
time in the RAM model of computation) the pruned $\Sigma$-tree $\ol{T}$.
\end{theorem}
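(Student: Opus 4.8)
The plan is to verify that Algorithm~\ref{algN} computes $\ol{T}$ and runs in quadratic time, assembling the two preceding lemmas with the complexity analysis carried out in prose above. Correctness breaks into two parts. First, the subtree induced on the output vertex set $X$ is a pruned retract of $T$: this is precisely the combined content of Lemma~\ref{lemma_retract} (it is a retract) and the lemma immediately following it (it is pruned). Second, I would invoke \cite[Proposition~3.5]{K_freeadequate}, which guarantees that $\ol{T}$ is the \emph{unique} pruned tree isomorphic to a retract of $T$; hence the induced subtree on $X$ is isomorphic to $\ol{T}$, as the theorem demands.

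One minor technical gap to close is that Algorithm~\ref{algN} returns a vertex set rather than a tree. To output $\ol{T}$ itself I would form the subtree of $T$ induced on $X$, keeping exactly those edges of $T$ with both endpoints in $X$ together with the original start and end vertices. This is a legitimate $\Sigma$-tree: the image of a retraction is a connected subgraph spanning $X$ and containing the start vertex, and since $T$ is a tree this image coincides with the induced subgraph on $X$. Extracting it costs a single $O(n)$ sweep of the edge list (or $O(n^2)$ from the precomputed adjacency matrices), which is absorbed by the overall bound.

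For the running time I would simply cite the preceding analysis: Steps~(1)--(4) are those of Algorithm~\ref{algH} with $m=n$ and cost $O(n^2)$, Step~(5) is $O(n)$, and Step~(6) is $O(n^2)$. The step I expect to be the main obstacle is the time bound for Step~(6): the nested loops superficially suggest a cubic cost, and the quadratic bound relies on an amortized argument. The key observation is that each execution of the inner block at Step~6(iii) is indexed by an ordered incident pair $(w,u)$, i.e.\ by an edge of $T$ traversed in a fixed direction, so the number of such executions over the whole run is at most twice the number of edges, hence $O(n)$; since each execution and each computation of $B_j^\star$ costs $O(n)$, Step~(6) contributes $O(n^2)$ in total. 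Summing all contributions yields the claimed quadratic bound and completes the proof.
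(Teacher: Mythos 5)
Your proposal is correct and follows essentially the same route as the paper, which establishes Theorem~\ref{thm_pruning} by combining Lemma~\ref{lemma_retract}, the subsequent lemma that the retract $X$ is pruned, and the step-by-step complexity analysis of Algorithm~\ref{algN} (including the same amortized count of executions of Step~6(iii) via ordered pairs $(w,u)$ indexing edges). Your two added remarks --- extracting the induced subtree on $X$ in $O(n)$ time and invoking the uniqueness of the pruned retract from \cite[Proposition~3.5]{K_freeadequate} --- merely make explicit what the paper leaves implicit, and are harmless.
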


Combining with the results of Section~\ref{sec_exptree}, Theorem~\ref{thm_pruning}
allows us to compute normal forms (as formulas) in the free adequate monoid.
Indeed, given a formula $w$, by Proposition~\ref{prop_exptotree} we may
convert it in quadratic time to a corresponding unpruned $\Sigma$-tree $T$ of comparable size. By
Theorem~\ref{thm_pruning} we may then compute the pruned tree $\ol{T}$ in
time quadratic in the size of $T$ and hence in the size of $w$. Finally, by
Proposition~\ref{prop_treetoexp} we can convert $\ol{T}$ into the uniquely
defined formula $\sigma(\ol{T})$ in time quadratic in the size of $\ol{T}$; since $\ol{T}$
is no larger than $T$, this is also quadratic in the size of $T$, and hence
in the size of $w$.

\begin{theorem}
Given a formula in the free adequate, left adequate or right adequate
semigroup or monoid, one can compute
a normal form in polynomial (quadratic in the RAM model of computation) time.
\end{theorem}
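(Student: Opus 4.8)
The plan is to obtain the normal form as the composition of the three procedures already established, reading off a canonical formula from the pruned tree. Given a formula $w$, Proposition~\ref{prop_exptotree} converts it in quadratic time into an unpruned tree $T \in UT^1(\Sigma)$ of size linear in $|w|$; Theorem~\ref{thm_pruning} then computes the pruned tree $\ol{T}$ in quadratic time; and finally Proposition~\ref{prop_treetoexp} produces the formula $\sigma(\ol{T})$ in quadratic time. Since each stage is quadratic and the intermediate trees have size linear in $|w|$, the whole computation runs in quadratic time, exactly as spelled out for the two-sided adequate monoid in the paragraph preceding the statement. The key observation making $\sigma(\ol{T})$ a genuine \emph{normal form} is that, by Proposition~\ref{prop_treetoexp}, $\sigma$ depends only on the isomorphism type of its argument, while $\ol{T}$ is the unique pruned tree representing the element of $T^1(\Sigma)$ determined by $w$; hence two formulas yield the same output if and only if they represent the same element.

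This disposes of the two-sided adequate monoid. For the one-sided cases the only additional point to check is that, when $w$ is a formula over $\Sigma$ using multiplication and a single unary operation, the output $\sigma(\ol{T})$ again uses only that operation, so that it is a legitimate formula in the free left or right adequate monoid (and not merely in the two-sided one). I would establish this through a geometric invariant on the representing trees. Recalling that the free right adequate monoid is generated from the base trees using multiplication and $+$, a straightforward induction on the unpruned operations $\times$ and $X^{(+)}$ shows that every tree arising from such a formula has all of its edges directed away from the start vertex (the base edge does, $\times$ glues along the trunk, and $X^{(+)}$ alters only the end vertex); dually, every tree arising from a $*$-formula has all edges directed towards the end vertex. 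This orientation property is preserved by pruning, since a retraction neither reverses nor creates edges, so $\ol{T}$ inherits it. Inspecting the definition of $\sigma$, a vertex $v$ contributes a factor $(a_i \rho(v_i))+$ precisely when the connecting edge points away from $v$ and a factor $(\rho(v_i)a_i)*$ precisely when it points towards $v$; under the orientation invariant only one of these two cases ever occurs, so $\sigma(\ol{T})$ uses solely the permitted unary operation.

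The distinction between semigroups and monoids is handled by discarding the trivial tree: a semigroup formula is a non-empty product, hence evaluates to a non-trivial element, whose pruning is again non-trivial, so that $\sigma(\ol{T})$ is a non-empty formula as required. I expect the main obstacle to be precisely the orientation bookkeeping of the previous paragraph --- verifying that the edge-direction invariant is respected by each unpruned operation and then confirming that it forces $\sigma$ into the correct one of its two cases --- since the complexity bound and the two-sided case follow immediately by composing Proposition~\ref{prop_exptotree}, Theorem~\ref{thm_pruning} and Proposition~\ref{prop_treetoexp}.
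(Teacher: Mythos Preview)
Your proof is correct and follows exactly the paper's approach: the paper's argument (given in the paragraph immediately preceding the theorem) is precisely the composition of Proposition~\ref{prop_exptotree}, Theorem~\ref{thm_pruning} and Proposition~\ref{prop_treetoexp} that you describe. You actually go further than the paper in explicitly verifying, via the edge-orientation invariant, that $\sigma(\ol{T})$ stays within the appropriate one-sided signature; the paper states the theorem for all six cases but only spells out the two-sided monoid argument, leaving the one-sided and semigroup cases implicit.
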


We note that the resulting language of normal forms for elements, which by
definition is the set
$$\lbrace \sigma(\ol{T}) \mid T \textrm{ is a pruned $\Sigma$-tree} \rbrace,$$
does not appear to have a completely elementary description without reference to trees. Of
course one may check (in quadratic time) whether a given formula $w$ is a
normal form by following the above procedure to convert $w$ to a normal form
and then comparing with $w$; we do not know of a fundamentally easier method.

We also note that in the case of free \textit{inverse} monoids (and semigroups), it is known
\cite[Theorem 11]{Lohrey07} that the word problem is decidable in (RAM) linear 
time. In the inverse case computations appear to be inherently simpler, as
the operation corresponding to computing a minimal retract (namely, computing a minimal
morphic image) can be performed by an iterative process of identifying vertices, where
the fact a pair of vertices can be identified is determined ``locally'', by looking only
in the immediate neighbourhood of the vertices. It seems unlikely that quite such a
fast algorithm can be obtained in the adequate case, but one might still ask whether our
algorithms can be significantly improved upon. Also shown in \cite[Theorem 11]{Lohrey07}
is that the word problem for a free inverse monoid is decidable (using a different
algorithm to the linear time one) in logarithmic space: the space complexity of the word
problem for free adequate monoids and semigroups is a natural topic for future research.

\section*{Acknowledgements}

The second author was supported by the Czech Government Grant Agency GA\v CR
project 13-01832S.
The authors thank the organisers of the 4th Novi Sad Algebraic Conference
(NSAC2013), which by bringing together researchers in semigroup theory
and universal algebra catalysed this research. They also thank Victoria Gould
for some helpful comments on the draft, and Stuart Margolis for pointing them
to the work of Lohrey and Ondrusch \cite{Lohrey07}.

\bibliographystyle{plain}

\def\cprime{$'$} \def\cprime{$'$}

\bibliography{mark}
\end{document}